\newtheorem{theorem}{Theorem}
\newtheorem{lemma}{Lemma}
\begin{document}
\setcounter{page}{1}

$\vphantom{A}$

$\vphantom{A}$

$\vphantom{A}$

$\vphantom{A}$

$\vphantom{A}$

$\vphantom{A}$

\begin{center}
{\LARGE \bf Lower bounds on expressions dependent on functions {\boldmath$\varphi(n)$}, {\boldmath$\psi(n)$} and {\boldmath$\sigma(n)$}, II}
\vspace{8mm}

{\Large \bf Stoyan Dimitrov}
\vspace{3mm}

Faculty of Applied Mathematics and Informatics, Technical University of Sofia\\
Blvd. St.Kliment Ohridski 8, Sofia 1756, Bulgaria \\
e-mail: \url{sdimitrov@tu-sofia.bg}

\vspace{2mm}

Department of Bioinformatics and Mathematical Modelling \\
Institute of Biophysics and Biomedical Engineering\\
Bulgarian Academy of Sciences\\
Acad. G. Bonchev Str. Bl. 105, Sofia 1113, Bulgaria \\
e-mail: \url{ xyzstoyan@gmail.com}
\vspace{2mm}
\end{center}
\vspace{10mm}

{\bf Abstract:}
In this paper we establish lower bounds on several expressions dependent on functions $\varphi(n)$, $\psi(n)$ and $\sigma(n)$.\\
{\bf Keywords:} Arithmetic functions $\varphi(n)$, $\psi(n)$ and $\sigma(n)$, Lower bounds.\\
{\bf 2020 Mathematics Subject Classification:}  11A25.
\vspace{10mm}

\section{Notations and formulas}
\indent

The letter $p$ with or without subscript will always denote prime number.
Let $n>1$ be positive integer with prime factorization
\begin{equation*}
n=p^{a_1}_1\cdots p^{a_k}_k\,.
\end{equation*}
The function $\Omega(n)$ counts the total number of prime factors of $n$ honoring their multiplicity. We have
\begin{equation*}
\Omega(n)=\sum\limits_{i=1}^{k}a_i \quad \mbox{ and } \quad \Omega(1)=0\,.
\end{equation*}
We denote by $\varphi(n)$ the Euler totient function which is defined as
the number of positive integers not greater than $n$ that are coprime to $n$.
We have
\begin{equation*}
\varphi(n)=\prod\limits_{i=1}^{k}p^{a_i-1}_i(p_i-1) \quad \mbox{ and } \quad \varphi(1)=1\,.
\end{equation*}
We define the Dedekind function $\psi(n)$ by the formula
\begin{equation*}
\psi(n)=\prod\limits_{i=1}^{k}p^{a_i-1}_i(p_i+1) \quad \mbox{ and } \quad \psi(1)=1\,.
\end{equation*}
The function $\sigma(n)$ denotes the sum of the positive divisors of $n$.
We have
\begin{equation*}
\sigma(n)=\prod\limits_{i=1}^{k}\frac{p^{a_i+1}_i-1}{p_i-1} \quad \mbox{ and } \quad \sigma(1)=1\,.
\end{equation*}

\section{Introduction and statement of the results}
\indent

In 2013 Atanassov \cite{Atanassov2013} proved that for every natural number $n\geq2$ the lower bound
\begin{equation*}
\varphi(n)\psi(n)\sigma(n)\geq n^3+n^2-n-1
\end{equation*}
holds. Afterwards S\'{a}ndor \cite{Sandor2014} improved Atanassov's result proving that
for all $n\geq1$ one has the inequalities
\begin{equation*}
\varphi(n)\psi(n)\sigma(n)\geq \varphi^\ast(n)(\sigma^\ast(n))^2\geq n^3+n^2-n-1\,,
\end{equation*}
where $\psi^\ast(n)$ and $\sigma^\ast(n)$ are the unitary analogues of the functions $\psi(n)$ and $\sigma(n)$.
We refer to \cite{Sandor1990}, \cite{SandorHandbook} for definitions, properties and references.
Very recently the author \cite{Dimitrov} showed that
\begin{align*}
&\varphi^2(n)+\psi^2(n)+\sigma^2(n)\geq 3n^2+2n+3\,,\\
&\varphi(n)\psi(n)+\varphi(n)\sigma(n)+\sigma(n)\psi(n)\geq 3n^2+2n-1
\end{align*}
for every natural number $n\geq2$.
As a continuation of these studies, we establish the following five theorems.
\begin{theorem}
For every natural number $n\geq2$ the lower bound
\begin{equation}\label{lowerbound1}
\varphi^3(n)+\psi^3(n)+\sigma^3(n)\geq 3n^3+3n^2+9n+1
\end{equation}
holds.
\end{theorem}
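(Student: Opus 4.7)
The plan is to combine Atanassov's bound $\varphi(n)\psi(n)\sigma(n) \geq n^3 + n^2 - n - 1$ with the classical algebraic identity
\[
a^3 + b^3 + c^3 - 3abc = \tfrac12 (a+b+c)\bigl((a-b)^2+(b-c)^2+(c-a)^2\bigr),
\]
applied to $a = \varphi(n)$, $b = \psi(n)$, $c = \sigma(n)$. The left-hand side of \eqref{lowerbound1} then splits as $3\varphi(n)\psi(n)\sigma(n) + \tfrac12(\varphi(n)+\psi(n)+\sigma(n))\,S(n)$, where $S(n) := (\varphi-\psi)^2 + (\psi-\sigma)^2 + (\sigma-\varphi)^2$. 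Atanassov's inequality handles the first piece, giving $3\varphi\psi\sigma \geq 3n^3 + 3n^2 - 3n - 3$, so the entire proof reduces to establishing
\[
\bigl(\varphi(n) + \psi(n) + \sigma(n)\bigr)\,S(n) \geq 24n + 8.
\]

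I would then bound each of the two factors separately. For the linear factor, the product formulas expand as $\varphi(n)/n + \psi(n)/n = \prod_{p\mid n}(1-1/p) + \prod_{p \mid n}(1+1/p) = 2\sum_{j \geq 0} e_{2j}(1/p_1,\dots,1/p_k) \geq 2$, so $\varphi + \psi \geq 2n$; combined with $\sigma(n) \geq n+1$ this yields $\varphi + \psi + \sigma \geq 3n + 1$. For $S(n)$, the sister expansion gives $\psi(n) - \varphi(n) = 2n \sum_{j \geq 0} e_{2j+1}(1/p_1,\dots,1/p_k) \geq 2n/p_1 \geq 2$, where the last step uses $p_1 \mid n$. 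Since $\sigma \geq \psi$, we also obtain $\sigma - \varphi \geq 2$, and $(\sigma - \psi)^2 \geq 0$ is free, giving $S(n) \geq 4 + 4 + 0 = 8$.

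Multiplying the two bounds yields $(3n+1)\cdot 8 = 24n + 8$, precisely what is required, and adding this to $3n^3 + 3n^2 - 3n - 3$ recovers $3n^3 + 3n^2 + 9n + 1$. Every estimate is simultaneously sharp exactly when $n$ is prime — Atanassov, $\varphi + \psi \geq 2n$, $\sigma \geq n+1$, and $\psi - \varphi \geq 2$ all become equalities at $n = p$ — which is consistent with $\varphi^3 + \psi^3 + \sigma^3$ attaining the proposed lower bound only at primes. The main conceptual step is spotting the decomposition via the identity for $a^3 + b^3 + c^3 - 3abc$; once this move is made, the problem dissolves into Atanassov's inequality together with the two elementary estimates $\varphi + \psi \geq 2n$ and $\psi - \varphi \geq 2$, each following directly from the multiplicative product representations of $\varphi$ and $\psi$, and I foresee no genuine obstacle in the remaining computation.
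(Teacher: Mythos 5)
Your argument is correct, but it takes a genuinely different route from the paper's. The paper proves \eqref{lowerbound1} by induction on $\Omega(n)$: it checks the cases $n=p$, $n=pq$, $n=p^2$ directly, then passes from $n$ to $np$, treating $p\nmid n$ (using $\varphi(n)<n$) and $p\mid n$ (using $\varphi(np)=p\varphi(n)$, $\psi(np)=p\psi(n)$, $\sigma(np)>p\sigma(n)$) separately; no outside input is needed, and the same template is reused for Theorems 2--5. You instead split $\varphi^3+\psi^3+\sigma^3$ via the identity $a^3+b^3+c^3-3abc=\frac12(a+b+c)\big((a-b)^2+(b-c)^2+(c-a)^2\big)$ and feed in Atanassov's bound $\varphi(n)\psi(n)\sigma(n)\geq n^3+n^2-n-1$ together with $\varphi+\psi\geq 2n$, $\sigma\geq n+1$, $\psi-\varphi\geq 2$ and $\sigma\geq\psi$; each of these auxiliary bounds is sound as you argue ($\psi-\varphi=2n\sum_{j\geq0}e_{2j+1}(1/p_1,\dots,1/p_k)\geq 2n/p_1\geq 2$, and $\sigma\geq\psi$ holds factorwise since $1+1/p+\dots+1/p^a\geq 1+1/p$), and the arithmetic closes exactly: $3(n^3+n^2-n-1)+\frac12(3n+1)\cdot 8=3n^3+3n^2+9n+1$, with equality throughout precisely when $n$ is prime, matching the equality case of \eqref{lowerbound1}. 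What your route buys is brevity and a structural explanation of the constants (known product bound plus a ``spread'' term $S(n)\geq 8$), at the price of importing Atanassov's 2013 inequality as a black box; the paper's induction is longer but fully self-contained and scales uniformly to all five theorems, whereas your identity is specific to the cubic symmetric sum. For a polished write-up you should state and justify (or cite) the two facts you use in passing, $\sigma(n)\geq\psi(n)$ and $\sigma(n)\geq n+1$, but these are one-line consequences of the product formulas, so there is no real gap.
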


\begin{theorem}
For every natural number $n\geq2$ the lower bound
\begin{equation}\label{lowerbound2}
\varphi^4(n)+\psi^4(n)+\sigma^4(n)\geq 3n^4+4n^3+18n^2+4n+3
\end{equation}
holds.
\end{theorem}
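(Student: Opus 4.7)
The key observation is the identity
\begin{equation*}
3n^4+4n^3+18n^2+4n+3 = (n-1)^4+2(n+1)^4,
\end{equation*}
so that \eqref{lowerbound2} is equivalent to $\varphi^4(n)+\psi^4(n)+\sigma^4(n)\geq(n-1)^4+2(n+1)^4$. In this form it is transparent that equality holds precisely when $n$ is prime, since $\varphi(p)=p-1$ and $\psi(p)=\sigma(p)=p+1$.

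For general $n\geq 2$ I would parameterize $\varphi(n)=n-c$, $\psi(n)=n+a$, $\sigma(n)=n+b$, noting that $a,b,c\geq 1$ and $c\leq n-1$. The crucial additional ingredient is the linear constraint $a+b\geq c+1$, equivalent to $\varphi(n)+\psi(n)+\sigma(n)\geq 3n+1$. This in turn follows from $\psi(n)\geq n+1$ combined with the classical bound $\varphi(n)+\sigma(n)\geq 2n$; the latter is a consequence of $\sigma(n)\geq\psi(n)$ together with the elementary identity $\prod_{i=1}^{k}(1-1/p_i)+\prod_{i=1}^{k}(1+1/p_i)\geq 2$, valid because this sum equals twice the sum of monomials in the $1/p_i$ of even degree (which is at least the empty-product term $1$).

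The inequality now reads $(n+a)^4+(n+b)^4+(n-c)^4\geq 2(n+1)^4+(n-1)^4$. Factoring the differences of fourth powers gives the elementary estimates
\begin{equation*}
(n+a)^4-(n+1)^4 \geq 4(n+1)^3(a-1), \qquad (n-1)^4-(n-c)^4 \leq 4(n-1)^3(c-1),
\end{equation*}
valid for $a\geq 1$ and $1\leq c\leq n-1$, each obtained by bounding the factorization $(x-y)(x+y)(x^2+y^2)$ using the monotonicity of its factors. Summing the three differences, the excess is bounded below by $4(n+1)^3(a+b-2)-4(n-1)^3(c-1)$, and this is nonnegative because $a+b-2\geq c-1$ by the constraint while $(n+1)^3\geq(n-1)^3$. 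The main obstacle I anticipate is verifying the auxiliary bound $\varphi(n)+\sigma(n)\geq 2n$ cleanly, but this follows at once from the elementary product identity noted above; the rest of the argument is direct factoring and a single monotonicity step.
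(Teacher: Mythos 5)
Your proof is correct, and it takes a genuinely different route from the paper. The paper proves the bound by induction on $\Omega(n)$: it verifies the cases $\Omega(n)=1$ and $\Omega(n)=2$ (both $n=pq$ and $n=p^2$) by brute-force expansion, and then handles the inductive step $n\mapsto np$ in two cases ($p\nmid n$ and $p\mid n$) using $\varphi(np)=\varphi(n)(p-1)$ or $p\varphi(n)$, etc., together with $\varphi(n)<n$ and $\sigma(np)>p\sigma(n)$, again expanding large polynomials. You instead argue directly: writing $\psi(n)=n+a$, $\sigma(n)=n+b$, $\varphi(n)=n-c$ with $a,b\geq 1$, $1\leq c\leq n-1$, you reduce the claim to $(n+a)^4+(n+b)^4+(n-c)^4\geq 2(n+1)^4+(n-1)^4$, and your two factoring estimates $(n+a)^4-(n+1)^4\geq 4(n+1)^3(a-1)$ and $(n-1)^4-(n-c)^4\leq 4(n-1)^3(c-1)$ are valid under exactly those range conditions, so the excess is at least $4(n+1)^3(a+b-2)-4(n-1)^3(c-1)\geq 0$ once you know $a+b\geq c+1$, i.e.\ $\varphi(n)+\psi(n)+\sigma(n)\geq 3n+1$; your derivation of this from $\psi(n)\geq n+1$ and $\varphi(n)+\sigma(n)\geq 2n$ (itself from $\sigma\geq\psi$ and $\varphi(n)+\psi(n)\geq 2n$ via the even-degree monomial expansion) is sound, and it is in fact the paper's Lemma 1 and Lemma 2, which the paper only invokes for Theorems 4 and 5. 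What your approach buys is a short, non-inductive, case-free argument that isolates the structural reason for the inequality (a single linear constraint plus the convexity-type estimates for fourth powers) and explains why equality occurs at primes; it would also adapt with no extra work to the cubic analogue of Theorem 1. What the paper's approach buys is a single mechanical template (induction on $\Omega(n)$) that it reuses uniformly for all five theorems, at the cost of lengthy expansions and no insight into sharpness.
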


\begin{theorem}
For every natural number $n\geq2$ the lower bound
\begin{equation}\label{lowerbound3}
\varphi^2(n)\psi^2(n)+\varphi^2(n)\sigma^2(n)+\sigma^2(n)\psi^2(n)\geq 3n^4+4n^3+2n^2+4n+3
\end{equation}
holds.
\end{theorem}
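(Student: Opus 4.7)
My starting observation is that \eqref{lowerbound3} becomes an equality at $n=p$ prime: using $\varphi(p)=p-1$ and $\psi(p)=\sigma(p)=p+1$ one checks that the left-hand side equals $2(p^2-1)^2+(p+1)^4=3p^4+4p^3+2p^2+4p+3$. Any proof must therefore give equality at primes and gain slack only from composite $n$ or higher prime powers, so the structure of the argument is dictated by finding a decomposition that reproduces exactly the prime contribution.

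The plan is to exploit the algebraic identity
\begin{equation*}
\varphi^2\psi^2+\varphi^2\sigma^2+\psi^2\sigma^2 = (n^2-\varphi\psi)^2+(n^2-\varphi\sigma)^2+(\psi\sigma-n^2)^2 + 2n^2\bigl(\varphi\psi+\varphi\sigma+\psi\sigma\bigr) - 3n^4,
\end{equation*}
obtained by expanding the three squares on the right (the argument $n$ is suppressed for brevity). Plugging the companion inequality $\varphi(n)\psi(n)+\varphi(n)\sigma(n)+\psi(n)\sigma(n)\geq 3n^2+2n-1$ from \cite{Dimitrov} into the middle term contributes at least $2n^2(3n^2+2n-1)-3n^4=3n^4+4n^3-2n^2$. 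Hence the theorem follows once the three squared gaps account for the remaining $4n^2+4n+3$.

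To prove $(n^2-\varphi\psi)^2+(n^2-\varphi\sigma)^2+(\psi\sigma-n^2)^2\geq 4n^2+4n+3$ I would combine three elementary facts. First, $\psi(n)\geq n+n/p_1\geq n+1$ and $\sigma(n)\geq n+1$ for $n\geq 2$ (the latter from the divisors $1$ and $n$), so $\psi(n)\sigma(n)\geq (n+1)^2$ and $(\psi\sigma-n^2)^2\geq (2n+1)^2=4n^2+4n+1$. Second, $\varphi(n)\psi(n)=n^2\prod_i(1-p_i^{-2})$ is a positive integer strictly less than $n^2$, so $(n^2-\varphi\psi)^2\geq 1$. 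Third, $\varphi(n)\sigma(n)=\prod_i p_i^{a_i-1}(p_i^{a_i+1}-1)=n^2\prod_i(1-p_i^{-a_i-1})$ is again a positive integer strictly less than $n^2$, so $(n^2-\varphi\sigma)^2\geq 1$. Adding the three estimates gives the required $4n^2+4n+3$.

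The main obstacle is spotting the right decomposition: it must simultaneously (a) invite the companion lemma from \cite{Dimitrov} through the term $2n^2(\varphi\psi+\varphi\sigma+\psi\sigma)$ and (b) leave exactly $(2n+1)^2+2$ for the squared gaps $(n^2-\varphi\psi)^2$, $(n^2-\varphi\sigma)^2$ and $(\psi\sigma-n^2)^2$ to absorb. Once the identity is in place the remaining estimates are routine consequences of multiplicativity and the trivial ordering $\varphi(n)<n\leq\psi(n),\sigma(n)$ valid for $n\geq 2$.
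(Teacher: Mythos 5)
Your proof is correct, and it takes a genuinely different route from the paper. The paper proves the bound by induction on $\Omega(n)$: it verifies the cases $n=p$, $n=pq$, $n=p^2$ by direct expansion, and then in the inductive step passes from $n$ to $np$, splitting into $p\nmid n$ and $p\mid n$ and using only the elementary facts $\varphi(n)<n$, $\psi(n)\geq n+1$, $\sigma(n)\geq n+1$, $\varphi(np)=p\varphi(n)$, $\psi(np)=p\psi(n)$, $\sigma(np)>p\sigma(n)$, together with brute-force polynomial comparisons. You instead avoid induction entirely: your identity $\varphi^2\psi^2+\varphi^2\sigma^2+\psi^2\sigma^2=(n^2-\varphi\psi)^2+(n^2-\varphi\sigma)^2+(\psi\sigma-n^2)^2+2n^2(\varphi\psi+\varphi\sigma+\psi\sigma)-3n^4$ checks out, the cited inequality $\varphi\psi+\varphi\sigma+\psi\sigma\geq 3n^2+2n-1$ is indeed available from the author's part~I paper (quoted in the introduction here, so there is no circularity), and your three estimates $(\psi\sigma-n^2)^2\geq(2n+1)^2$, $(n^2-\varphi\psi)^2\geq1$, $(n^2-\varphi\sigma)^2\geq1$ are valid because $\varphi(n)\psi(n)$ and $\varphi(n)\sigma(n)$ are integers strictly below $n^2$ while $\psi(n)\sigma(n)\geq(n+1)^2$; the totals add exactly to the claimed right-hand side, and all your inequalities are tight at $n=p$, matching the equality case. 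What each approach buys: yours is shorter, conceptually transparent about where the slack $(2n+1)^2+2$ comes from, and reduces the theorem to the part~I result; the paper's is self-contained (not resting on the earlier inequality, whose own proof is of the same inductive type) and runs uniformly in parallel with the proofs of the other four theorems in the paper.
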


\begin{theorem}
For every natural number $n\geq2$ the lower bound
\begin{equation}\label{lowerbound4}
\varphi^2(n)\big(\psi(n)+\sigma(n)\big)+\psi^2(n)\big(\varphi(n)+\sigma(n)\big)+\sigma^2(n)\big(\varphi(n)+\psi(n)\big)\geq 6n^3+6n^2+2n+2
\end{equation}
holds.
\end{theorem}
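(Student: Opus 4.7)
The plan is to rewrite the left-hand side of (\ref{lowerbound4}) via a symmetric algebraic identity and then bound the two resulting pieces by completely separate elementary estimates. A direct expansion shows
\begin{equation*}
a^2(b+c)+b^2(a+c)+c^2(a+b)\;=\;a(b-c)^2+b(a-c)^2+c(a-b)^2+6abc
\end{equation*}
for all real $a,b,c$. Specialising to $(a,b,c)=(\varphi(n),\psi(n),\sigma(n))$ turns the left-hand side of (\ref{lowerbound4}) into a sum of three nonnegative ``squared-difference'' terms plus $6\varphi(n)\psi(n)\sigma(n)$. Checking the decomposition at a prime $n=p$ already explains the equality case: the term $\varphi(\psi-\sigma)^2$ vanishes since $\psi(p)=\sigma(p)=p+1$, the remaining two squared terms sum to $8(p+1)$, and Atanassov's inequality is tight and contributes $6(p^3+p^2-p-1)$, adding to exactly $6p^3+6p^2+2p+2$.

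For general $n\ge 2$ I will bound each piece separately. For the $6\varphi(n)\psi(n)\sigma(n)$ block I invoke the Atanassov inequality $\varphi(n)\psi(n)\sigma(n)\ge n^3+n^2-n-1$ cited in the introduction, which supplies $6n^3+6n^2-6n-6$. For the squared-difference block I use only elementary size estimates: $\varphi(n)\le n-1$ (since $n$ is not coprime to itself), $\sigma(n)\ge n+1$ (from the divisors $1$ and $n$), and $\psi(n)\ge n+1$ (which follows from $\psi(p^a)=p^a+p^{a-1}\ge p^a+1$ combined with the elementary inequality $\prod_i(x_i+1)\ge\prod_i x_i+1$ for $x_i\ge 1$). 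These give $\sigma(n)-\varphi(n)\ge 2$ and $\psi(n)-\varphi(n)\ge 2$, so
\begin{equation*}
\psi(n)\bigl(\sigma(n)-\varphi(n)\bigr)^2+\sigma(n)\bigl(\psi(n)-\varphi(n)\bigr)^2\;\ge\;4\psi(n)+4\sigma(n)\;\ge\;8(n+1),
\end{equation*}
while $\varphi(n)(\psi(n)-\sigma(n))^2\ge 0$ is simply discarded. Adding the two contributions yields $6n^3+6n^2-6n-6+8n+8=6n^3+6n^2+2n+2$, which is exactly the right-hand side of (\ref{lowerbound4}).

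The argument is therefore essentially a one-line identity followed by two short size estimates, so no serious obstacle is expected once the decomposition is spotted. The useful observation is that combining the standard factorisation $\sum a^2(b+c)=(a+b)(b+c)(c+a)-2abc$ with the identity $(a+b)(b+c)(c+a)-8abc=\sum a(b-c)^2$ replaces a difficult \emph{upper} bound on $\varphi\psi\sigma$ by a trivial \emph{lower} bound on two gaps between $\varphi$ and $\psi,\sigma$; this is what matches Atanassov's inequality to (\ref{lowerbound4}) with exactly the right constants.
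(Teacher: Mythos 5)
Your proof is correct, and it takes a genuinely different route from the paper. You rewrite the left-hand side via the identity $a^2(b+c)+b^2(a+c)+c^2(a+b)=a(b-c)^2+b(a-c)^2+c(a-b)^2+6abc$, discard $\varphi(n)\bigl(\psi(n)-\sigma(n)\bigr)^2\ge 0$, bound the other two squared-gap terms using $\varphi(n)\le n-1$ and $\psi(n),\sigma(n)\ge n+1$ (giving $8n+8$), and feed the product block into Atanassov's inequality $\varphi(n)\psi(n)\sigma(n)\ge n^3+n^2-n-1$, which sums to exactly $6n^3+6n^2+2n+2$; all the identities and size estimates check out, and your equality analysis at primes is consistent. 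The paper instead proves the theorem by induction on $\Omega(n)$: explicit polynomial expansions for the base cases $n=p$, $n=pq$, $n=p^2$, and then an induction step splitting into $p\nmid n$ and $p\mid n$, using $\psi(n),\sigma(n)\ge n+1$ together with $\varphi(n)+\psi(n)\ge 2n$ and $\varphi(n)+\sigma(n)\ge 2n$ (Lemmas 1 and 2). What your argument buys is brevity and transparency: no induction, a clear structural explanation of why the constant terms match, and a visible equality case; what it costs is reliance on Atanassov's 2013 inequality as a black box (itself typically proved by the same kind of $\Omega(n)$-induction), whereas the paper's method is self-contained modulo two simple cited lemmas and serves as a uniform template for all five theorems in the paper.
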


\begin{theorem}
For every natural number $n\geq2$ the lower bound
\begin{equation}\label{lowerbound5}
\varphi^3(n)\big(\psi(n)+\sigma(n)\big)+\psi^3(n)\big(\varphi(n)+\sigma(n)\big)+\sigma^3(n)\big(\varphi(n)+\psi(n)\big)\geq 6n^4+8n^3+12n^2+8n-2
\end{equation}
holds.
\end{theorem}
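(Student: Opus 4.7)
The plan is to reduce (\ref{lowerbound5}) to Theorem~3 plus one short arithmetic estimate. Applying the elementary identity $x^3 y + xy^3 = 2x^2y^2 + xy(x-y)^2$ to each pair among $\varphi(n), \psi(n), \sigma(n)$ and summing rewrites the left-hand side of (\ref{lowerbound5}) as
\begin{equation*}
2\bigl(\varphi^2\psi^2+\varphi^2\sigma^2+\psi^2\sigma^2\bigr) + \varphi\psi(\psi-\varphi)^2 + \varphi\sigma(\sigma-\varphi)^2 + \psi\sigma(\sigma-\psi)^2.
\end{equation*}
By Theorem~3 the first summand is at least $6n^4+8n^3+4n^2+8n+6$, so the three remaining non-negative terms need to supply only the deficit $8(n^2-1)$.

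For this deficit I would discard $\psi\sigma(\sigma-\psi)^2$ and prove separately that both $\varphi\psi(\psi-\varphi)^2$ and $\varphi\sigma(\sigma-\varphi)^2$ are at least $4(n^2-1)$. Set $u:=\varphi(n)\psi(n)$. The identity $(\psi-\varphi)^2=(\psi+\varphi)^2-4\varphi\psi$ combined with the classical bound $\varphi(n)+\psi(n)\geq 2n$ (immediate from $\prod_{i=1}^{k}(1+1/p_i)+\prod_{i=1}^{k}(1-1/p_i)\geq 2$) gives $\varphi\psi(\psi-\varphi)^2 \geq 4u(n^2-u)$. The inequality $4u(n^2-u)\geq 4(n^2-1)$ rearranges to $(u-1)(n^2-u-1)\geq 0$, and both factors are non-negative for $n\geq 2$: $u\geq 1$ is trivial, and
\begin{equation*}
u=n^2\prod_{i=1}^{k}\bigl(1-1/p_i^{2}\bigr)\leq n^2\bigl(1-1/p_1^{2}\bigr)\leq n^2-1
\end{equation*}
since $p_1\leq n$. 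The analogous bound for $\varphi\sigma(\sigma-\varphi)^2$ follows identically once one notes $\sigma(n)\geq \psi(n)$ (hence $\varphi+\sigma\geq 2n$) and $\varphi(n)\sigma(n)=n^2\prod_{i=1}^{k}(1-1/p_i^{a_i+1})\leq n^2-1$, where the upper bound uses $p_i^{a_i+1}=p_i\cdot p_i^{a_i}\leq n\cdot n=n^2$ for every $i$.

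Summing the two auxiliary estimates yields the needed $8(n^2-1)$; together with Theorem~3 this assembles to exactly $6n^4+8n^3+12n^2+8n-2$. The main obstacle is verifying the two upper bounds $\varphi(n)\psi(n)\leq n^2-1$ and $\varphi(n)\sigma(n)\leq n^2-1$; these encode the arithmetic content of the argument and force every intermediate inequality to be tight at prime $n$, matching the equality case of (\ref{lowerbound5}) on primes.
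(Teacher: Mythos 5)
Your proof is correct, but it takes a genuinely different route from the paper. The paper proves Theorem 5 from scratch by induction on $\Omega(n)$: it checks the base cases $\Omega(n)=1$ and $\Omega(n)=2$ (with $n=pq$ and $n=p^2$ treated separately) by direct expansion, and in the induction step splits into $p\nmid n$ and $p\mid n$, using $\psi(n),\sigma(n)\geq n+1$, Lemma \ref{varphipsi2n}, Lemma \ref{varphisigma2n} and the relations \eqref{varphipsisigma}. You instead reduce \eqref{lowerbound5} to Theorem 3 via the identity $x^3y+xy^3=2x^2y^2+xy(x-y)^2$, which leaves a deficit of exactly $8(n^2-1)$, and you cover it by showing $\varphi(n)\psi(n)(\psi(n)-\varphi(n))^2\geq 4(n^2-1)$ and $\varphi(n)\sigma(n)(\sigma(n)-\varphi(n))^2\geq 4(n^2-1)$; the key inputs are $\varphi+\psi\geq 2n$, $\varphi+\sigma\geq 2n$ (the paper's Lemmas 1 and 2, which you rederive) together with the upper bounds $\varphi(n)\psi(n)\leq n^2-1$ and $\varphi(n)\sigma(n)\leq n^2-1$, both of which you justify correctly from the product formulas, and the factorization $(u-1)(n^2-u-1)\geq 0$ is checked on both sides. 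All the arithmetic (the doubled Theorem 3 bound $6n^4+8n^3+4n^2+8n+6$, the deficit $8(n^2-1)$, the discarded term $\psi\sigma(\sigma-\psi)^2\geq 0$) is right, and there is no circularity since Theorem 3 precedes Theorem 5 and is proved independently in the paper. What your approach buys is brevity and transparency: no new induction, no base-case expansions, and the equality case at prime $n$ is visible (there $\varphi\psi=n^2-1$ and $\sigma=\psi$, so every estimate is tight). What the paper's approach buys is self-containedness and uniformity: the same induction template handles all five theorems without relying on any of the others, whereas your argument is conditional on Theorem 3 having been established first.
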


\section{Lemmas}
\indent

\begin{lemma}\label{varphipsi2n}
For every natural number $n\geq2$ the lower bound
\begin{equation*}
\varphi(n)+\psi(n)\geq2n
\end{equation*}
holds.
\end{lemma}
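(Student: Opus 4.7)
The plan is to normalize by $n$. Writing $\varphi(n) = n \prod_{i=1}^{k}\left(1 - 1/p_i\right)$ and $\psi(n) = n \prod_{i=1}^{k}\left(1 + 1/p_i\right)$ from the formulas recorded in Section~1, the desired inequality $\varphi(n) + \psi(n) \geq 2n$ reduces to the dimensionless estimate
\[
\prod_{i=1}^{k}\left(1 - \frac{1}{p_i}\right) + \prod_{i=1}^{k}\left(1 + \frac{1}{p_i}\right) \geq 2,
\]
taken over the distinct prime divisors $p_1, \dots, p_k$ of $n$.

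To establish this elementary inequality, I would expand both products by the distributive law. The identity
\[
\prod_{i=1}^{k}(1 \pm x_i) = \sum_{S \subseteq \{1,\ldots,k\}} (\pm 1)^{|S|} \prod_{i \in S} x_i
\]
shows that, upon summing the two sides for the two choices of sign, the odd-cardinality contributions cancel while the even-cardinality contributions double. Applying this with $x_i = 1/p_i$ yields
\[
\prod_{i=1}^{k}\left(1 - \frac{1}{p_i}\right) + \prod_{i=1}^{k}\left(1 + \frac{1}{p_i}\right) = 2\sum_{\substack{S \subseteq \{1,\ldots,k\} \\ |S|\text{ even}}} \prod_{i \in S}\frac{1}{p_i} = 2 + 2\sum_{\substack{|S| \geq 2 \\ |S|\text{ even}}} \prod_{i \in S}\frac{1}{p_i} \geq 2,
\]
since every remaining summand is strictly positive.

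There is no real obstacle: the inequality follows at once from the algebraic identity, and it is sharp, with equality precisely when the even-cardinality sum reduces to its $S = \emptyset$ term, i.e., when $k = 1$ and $n$ is a prime power $p^a$, consistent with the direct computation $\varphi(p^a) + \psi(p^a) = p^{a-1}(p-1) + p^{a-1}(p+1) = 2p^a$. As an alternative one could argue by induction on $k$, writing $n = m\cdot p^a$ with $\gcd(m,p)=1$, using the multiplicativity of $\varphi$ and $\psi$ to obtain $\varphi(n)+\psi(n) = p^{a-1}\bigl[p(\varphi(m)+\psi(m)) + (\psi(m)-\varphi(m))\bigr]$, and combining the inductive hypothesis $\varphi(m)+\psi(m) \geq 2m$ with the trivial bound $\psi(m) \geq \varphi(m)$.
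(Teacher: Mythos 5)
Your proof is correct. Note, however, that the paper itself does not prove this lemma at all: its ``proof'' is a citation to Atanassov (2011, Lemma 1), so there is no internal argument to compare against. Your normalization $\varphi(n)=n\prod_{i=1}^{k}(1-1/p_i)$, $\psi(n)=n\prod_{i=1}^{k}(1+1/p_i)$ and the observation that summing the two expanded products cancels the odd-cardinality terms and doubles the even-cardinality ones, giving
\[
\prod_{i=1}^{k}\Bigl(1-\frac{1}{p_i}\Bigr)+\prod_{i=1}^{k}\Bigl(1+\frac{1}{p_i}\Bigr)
=2\sum_{\substack{S\subseteq\{1,\dots,k\}\\ |S|\ \mathrm{even}}}\ \prod_{i\in S}\frac{1}{p_i}\ \geq 2,
\]
is a complete and fully elementary justification, and it has the added value of identifying the equality case ($k=1$, i.e.\ $n$ a prime power), which is consistent with the direct check $\varphi(p^a)+\psi(p^a)=2p^a$. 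Your alternative inductive argument via multiplicativity, using $\varphi(n)+\psi(n)=p^{a-1}\bigl[p(\varphi(m)+\psi(m))+(\psi(m)-\varphi(m))\bigr]$ together with $\psi(m)\geq\varphi(m)$, is also sound. Either version would serve as a self-contained substitute for the external reference, which is arguably preferable in a paper whose main theorems lean on this lemma.
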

\begin{proof}
See (\cite{Atanassov2011}, Lemma 1).
\end{proof}

\begin{lemma}\label{varphisigma2n}
For every natural number $n\geq2$ the lower bound
\begin{equation*}
\varphi(n)+\sigma(n)\geq2n
\end{equation*}
holds.
\end{lemma}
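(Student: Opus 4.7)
The plan is to reduce this lemma to the preceding Lemma~\ref{varphipsi2n} ($\varphi(n)+\psi(n)\geq 2n$) by establishing the pointwise comparison $\sigma(n)\geq\psi(n)$ for every positive integer~$n$. Once that domination is in hand, the chain $\varphi(n)+\sigma(n)\geq\varphi(n)+\psi(n)\geq 2n$ delivers the conclusion immediately.

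To obtain $\sigma(n)\geq\psi(n)$, I would argue by multiplicativity. On a prime power $p^{a}$ a direct computation from the formulas recalled in Section~1 gives
\[
\sigma(p^{a})-\psi(p^{a})=\bigl(1+p+p^{2}+\cdots+p^{a}\bigr)-\bigl(p^{a-1}+p^{a}\bigr)=1+p+\cdots+p^{a-2},
\]
where the right-hand side is interpreted as the empty sum (hence $0$) when $a=1$ and is strictly positive for $a\geq2$. Since $\sigma$ and $\psi$ are multiplicative and take positive values, factoring $n=p_{1}^{a_{1}}\cdots p_{k}^{a_{k}}$ and multiplying the per-prime-power inequalities $\sigma(p_{i}^{a_{i}})\geq\psi(p_{i}^{a_{i}})>0$ yields $\sigma(n)\geq\psi(n)$ for all $n\geq1$.

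Combining this with Lemma~\ref{varphipsi2n} one gets
\[
\varphi(n)+\sigma(n)\geq\varphi(n)+\psi(n)\geq 2n
\]
for every $n\geq2$, which is precisely the claimed lower bound. There is essentially no technical obstacle here; the whole content of the argument is the elementary observation that $\sigma$ dominates $\psi$ pointwise, which lets one transport the already-known estimate $\varphi+\psi\geq 2n$ to the sum $\varphi+\sigma$. An alternative, more self-contained route would be to verify $\varphi(p^{a})+\sigma(p^{a})\geq 2p^{a}$ on prime powers directly (equality holds precisely at $a=1$) and then bootstrap over the factorization, but the reduction via $\sigma\geq\psi$ is shorter and reuses the preceding lemma.
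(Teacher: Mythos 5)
Your proposal is correct, but it is a genuinely different route from the paper's: the paper does not prove this lemma at all, it simply cites S\'{a}ndor and Atanassov (2019, Remark~5), whereas you give a self-contained argument by reducing to Lemma~\ref{varphipsi2n}. Your key step $\sigma(n)\geq\psi(n)$ is verified correctly: on prime powers, $\sigma(p^{a})-\psi(p^{a})=1+p+\cdots+p^{a-2}\geq 0$ (with equality exactly when $a=1$), and since both functions are multiplicative and positive the inequality passes to all $n$, after which $\varphi(n)+\sigma(n)\geq\varphi(n)+\psi(n)\geq 2n$ follows from Lemma~\ref{varphipsi2n} (itself only cited in the paper, to Atanassov 2011, so your proof is self-contained modulo that reference). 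What your approach buys is an explicit, elementary derivation inside the paper and the useful observation that the $\sigma$-inequality is simply the $\psi$-inequality plus the pointwise domination $\sigma\geq\psi$; what the paper's citation buys is brevity and attribution to the original source. One small caveat: your parenthetical alternative --- checking $\varphi(p^{a})+\sigma(p^{a})\geq 2p^{a}$ on prime powers and then ``bootstrapping over the factorization'' --- is not immediate as stated, because $\varphi+\sigma$ is not multiplicative, so prime-power inequalities for the sum do not multiply together; this does not affect your main argument, which is complete as written.
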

\begin{proof}
See (\cite{Sandor2019}, Remark 5).
\end{proof}

\section{Proof of Theorem 1}
\indent

Consider several cases.

Case 1. \; $\Omega(n)=1$. Bearing in mind that $n$ is a prime number we write
\begin{equation*}
\varphi^3(n)+\psi^3(n)+\sigma^3(n)=(n-1)^3+2(n+1)^3=3n^3+3n^2+9n+1\,.
\end{equation*}

Case 2. \; $\Omega(n)=2$, $n=pq$, where $p$ and $q$ are distinct primes. Then
\begin{align*}
&\varphi^3(n)+\psi^3(n)+\sigma^3(n)\\
&=(p-1)^3(q-1)^3+2(p+1)^3(q+1)^3\\
&=3p^3q^3+27p^2q^2+27pq+3+3p^3q^2+3p^2q^3+9p^3q+9pq^3\\
&+p^3+9p^2q+9pq^2+q^3+9p^2+9q^2+3p+3q\\
&>3p^3q^3+3p^2q^2+9pq+1\\
&=3n^3+3n^2+9n+1\,.
\end{align*}

Case 3. \; $\Omega(n)=2$, $n=p^2$, where $p$ is a prime. Then
\begin{align*}
\varphi^3(n)+\psi^3(n)+\sigma^3(n)&=p^3(p-1)^3+p^3(p+1)^3+(p^2+p+1)^3\\
&=3p^6+3p^5+12p^4+7p^3+6p^2+3p+1\\
&>3p^6+3p^4+9p^2+1\\
&=3n^3+3n^2+9n+1\,.
\end{align*}
Now we assume that \eqref{lowerbound1} is true for every natural number $n$ with
$\Omega(n)=m$ for some natural number $m\geq2$.
Let $p$ be a prime number. Then $\Omega(np)=\Omega(n)+1$.

Case A.  \;  $p\nmid n$. Using that
\begin{equation}\label{varphin<n}
\varphi(n)<n
\end{equation}
we obtain
\begin{align*}
\varphi^3(np)+\psi^3(np)+\sigma^3(np)&=\varphi^3(n)(p-1)^3+\psi^3(n)(p+1)^3+\sigma^3(n)(p+1)^3\\
&=(p+1)^3\big[\varphi^3(n)+\psi^3(n)+\sigma^3(n)\big]-(6p^2+2)\varphi^3(n)\\
&\geq(p+1)^3(3n^3+3n^2+9n+1)-(6p^2+2)n^3\\
&=3n^3p^3+9n^2p^2+27np+1+p^3(3n^2+9n+1)\\
&+3p^2(n^3+9n+1)+3p(3n^3+3n^2+1)+n^3+3n^2+9n\\
&>3n^3p^3+3n^2p^2+9np+1\,.
\end{align*}

Case B.  \;  $p\,|\,n$. Using that
\begin{equation}\label{varphipsisigma}
\varphi(np)=p\varphi(n)\,, \quad \psi(np)=p\psi(n)\,, \quad \sigma(np)>p\sigma(n)
\end{equation}
we get
\begin{align*}
\varphi^3(np)+\psi^3(np)+\sigma^3(np)
&>p^3\big[\varphi^3(n)+\psi^3(n)+\sigma^3(n)\big]\\
&\geq p^3(3n^3+3n^2+9n+1)\\
&=3n^3p^3+3n^2p^2+9np+3n^2p^2(p-1)+9np(p^2-1)+p^3\\
&>3n^3p^3+3n^2p^2+9np+1\,.
\end{align*}
This completes the proof of Theorem 1.

\section{Proof of Theorem 2}
\indent

Consider several cases.

Case 1. \; $\Omega(n)=1$. Taking into account that $n$ is a prime number we have
\begin{equation*}
\varphi^4(n)+\psi^4(n)+\sigma^4(n)=(n-1)^4+2(n+1)^4=3n^4+4n^3+18n^2+4n+3\,.
\end{equation*}

Case 2. \; $\Omega(n)=2$, $n=pq$, where $p$ and $q$ are distinct primes. Then
\begin{align*}
&\varphi^4(n)+\psi^4(n)+\sigma^4(n)\\
&=(p-1)^4(q-1)^4+2(p+1)^4(q+1)^4\\
&=3p^4q^4+48p^3q^3+108p^2q^2+48pq+3+4p^4q^3+4p^3q^4+18p^4q^2+18p^2q^4\\
&+4p^4q+24p^3q^2+24p^2q^3+4pq^4+3p^4+48p^3q+48pq^3+3q^4\\
&+4p^3+24p^2q+24pq^2+4q^3+18p^2+18q^2+4p+4q\\
&>3p^4q^4+4p^3q^3+18p^2q^2+4pq+3\\
&=3n^4+4n^3+18n^2+4n+3\,.
\end{align*}

Case 3. \; $\Omega(n)=2$, $n=p^2$, where $p$ is a prime. Then
\begin{align*}
\varphi^4(n)+\psi^4(n)+\sigma^4(n)&=p^4(p-1)^4+p^4(p+1)^4+(p^2+p+1)^4\\
&=3p^8+4p^7+22p^6+16p^5+21p^4+16p^3+10p^2+4p+1\\
&>3p^8+4p^6+18p^4+4p^2+3\\
&=3n^4+4n^3+18n^2+4n+3\,.
\end{align*}
Now we assume that \eqref{lowerbound2} is true for every natural number $n$ with
$\Omega(n)=m$ for some natural number $m\geq2$.
Let $p$ be a prime number. Then $\Omega(np)=\Omega(n)+1$.

Case A.  \;  $p\nmid n$. Using \eqref{varphin<n} we find
\begin{align*}
\varphi^4(np)+\psi^4(np)+\sigma^4(np)&=\varphi^4(n)(p-1)^4+\psi^4(n)(p+1)^4+\sigma^4(n)(p+1)^4\\
&=(p+1)^4\big[\varphi^4(n)+\psi^4(n)+\sigma^4(n)\big]-(8p^3+8p)\varphi^4n)\\
&\geq(p+1)^4(3n^4+4n^3+18n^2+4n+3)-(8p^3+8p)n^4\\
&=3n^4p^4+16n^3p^3+108n^2p^2+16np+3+4n^4p^3+4n^3p^4\\
&+18n^4p^2+18n^2p^4+4n^4p+24n^3p^2+72n^2p^3+4np^4\\
&+3n^4+16n^3p+16np^3+3p^4+4n^3+72n^2p+24np^2+12p^3\\
&+18n^2+18p^2+4n+12p\\
&>3n^4p^4+4n^3p^3+18n^2p^2+4np+3\,.
\end{align*}

Case B.  \;  $p\,|\,n$. By \eqref{varphipsisigma} we obtain
\begin{align*}
\varphi^4(np)+\psi^4(np)+\sigma^4(np)
&>p^4\big[\varphi^4(n)+\psi^4(n)+\sigma^4(n)\big]\\
&\geq p^4(3n^4+4n^3+18n^2+4n+3)\\
&=3n^4p^4+4n^3p^3+18n^2p^2+4np+3\\
&+4n^3p^3(p-1)+18n^2p^2(p^2-1)+4np(p^3-1)+3(p^4-1)\\
&>3n^4p^4+4n^3p^3+18n^2p^2+4np+3\,.
\end{align*}
This completes the proof of Theorem 2.

\section{Proof of Theorem 3}
\indent

Consider several cases.

Case 1. \; $\Omega(n)=1$. Having in mind that $n$ is a prime number we deduce
\begin{equation*}
\varphi^2(n)\psi^2(n)+\varphi^2(n)\sigma^2(n)+\sigma^2(n)\psi^2(n)=2(n^2-1)^2+(n+1)^4=3n^4+4n^3+2n^2+4n+3\,.
\end{equation*}

Case 2. \; $\Omega(n)=2$, $n=pq$, where $p$ and $q$ are distinct primes. Then
\begin{align*}
&\varphi^2(n)\psi^2(n)+\varphi^2(n)\sigma^2(n)+\sigma^2(n)\psi^2(n)\\
&=2(p^2-1)^2(q^2-1)^2+(p+1)^4(q+1)^4\\
&=3p^4q^4+16p^3q^3+44p^2q^2+16pq+3+4p^4q^3+4p^3q^4+2p^4q^2+2p^2q^4\\
&+4p^4q+24p^3q^2+24p^2q^3+4pq^4+3p^4+16p^3q+16pq^3+3q^4\\
&+4p^3+24p^2q+24pq^2+4q^3+2p^2+2q^2+4p+4q\\
&>3p^4q^4+4p^3q^3+2p^2q^2+4pq+3\\
&=3n^4+4n^3+2n^2+4n+3\,.
\end{align*}

Case 3. \; $\Omega(n)=2$, $n=p^2$, where $p$ is a prime. Then
\begin{align*}
&\varphi^2(n)\psi^2(n)+\varphi^2(n)\sigma^2(n)+\sigma^2(n)\psi^2(n)\\
&=p^4(p^2-1)^2+p^2(p^3-1)^2+(p^2+p)^2(p^2+p+1)^2\\
&=3p^8+4p^7+6p^6+8p^5+9p^4+4p^3+2p^2\\
&>3p^8+4p^6+2p^4+4p^2+3\\
&>3n^4+4n^3+2n^2+4n+3\,.
\end{align*}
Let us assume that \eqref{lowerbound3} is true for every natural number $n$ with
$\Omega(n)=m$ for some natural number $m\geq2$.
Let $p$ be a prime number. Then $\Omega(np)=\Omega(n)+1$.

Case A.  \;  $p\nmid n$. Using that
\begin{equation}\label{psisigma}
\psi(n)\geq n+1\,, \quad \sigma(n)\geq n+1
\end{equation}
we derive
\begin{align*}
&\varphi^2(np)\psi^2(np)+\varphi^2(np)\sigma^2(np)+\sigma^2(np)\psi^2(np)\\
&=\varphi^2(n)\psi(n)^2(p^2-1)^2+\varphi^2(n)\sigma^2(n)(p^2-1)^2+\psi^2(n)\sigma^2(n)(p+1)^4\\
&=(p^2-1)^2\big[\varphi^2(n)\psi^2(n)+\varphi^2(n)\sigma^2(n)+\sigma^2(n)\psi^2(n)\big]+4p(p+1)^2\psi(n)\sigma(n)\\
&\geq(p^2-1)^2(3n^4+4n^3+2n^2+4n+3)+4p(p+1)^2(n+1)^4\\
&=3n^4p^4+16n^3p^3+44n^2p^2+16np+3+4n^4p^3+4n^3p^4+2n^4p^2+2n^2p^4\\
&+4n^4p+24n^3p^2+24n^2p^3+4np^4+3n^4+16n^3p+16np^3+3p^4\\
&+4n^3+24n^2p+24np^2+4p^3+2n^2+2p^2+4n+4p\\
&>3n^4p^4+4n^3p^3+2n^2p^2+4np+3\,.
\end{align*}

Case B.  \;  $p\,|\,n$. From \eqref{varphipsisigma} we establish
\begin{align*}
&\varphi^2(np)\psi^2(np)+\varphi^2(np)\sigma^2(np)+\sigma^2(np)\psi^2(np)\\
&>p^4\big[\varphi^2(n)\psi^2(n)+\varphi^2(n)\sigma^2(n)+\sigma^2(n)\psi^2(n)\big]\\
&\geq p^4(3n^4+4n^3+2n^2+4n+3)\\
&=3n^4p^4+4n^3p^3+2n^2p^2+4np+3+4n^3p^3(p-1)\\
&+2n^2p^2(p^2-1)+4np(p^3-1)+3(p^4-1)\\
&>3n^4p^4+4n^3p^3+2n^2p^2+4np+3\,.
\end{align*}
This completes the proof of Theorem 3.

\section{Proof of Theorem 4}
\indent

Consider several cases.

Case 1. \; $\Omega(n)=1$. Bearing in mind that $n$ is a prime number we write
\begin{align*}
&\varphi^2(n)\big(\psi(n)+\sigma(n)\big)+\psi^2(n)\big(\varphi(n)+\sigma(n)\big)+\sigma^2(n)\big(\varphi(n)+\psi(n)\big)\\
&=2(n-1)^2(n+1)+2(n+1)^2(n-1)+2(n+1)^3\\
&=6n^3+6n^2+2n+2\,.
\end{align*}

Case 2. \; $\Omega(n)=2$, $n=pq$, where $p$ and $q$ are distinct primes. Then
\begin{align*}
&\varphi^2(n)\big(\psi(n)+\sigma(n)\big)+\psi^2(n)\big(\varphi(n)+\sigma(n)\big)+\sigma^2(n)\big(\varphi(n)+\psi(n)\big)\\
&=2(p-1)^2(q-1)^2(p+1)(q+1)+2(p+1)^2(q+1)^2(p-1)(q-1)+2(p+1)^3(q+1)^3\\
&=6p^3q^3+22p^2q^2+22pq+6+6p^3q^2+6p^2q^3+2p^3q+2pq^3\\
&+2p^3+18p^2q+18pq^2+2q^3+2p^2+2q^2+6p+6q\\
&>6p^3q^3+6p^2q^2+2pq+2\\
&=6n^3+6n^2+2n+2\,.
\end{align*}

Case 3. \; $\Omega(n)=2$, $n=p^2$, where $p$ is a prime. Then
\begin{align*}
&\varphi^2(n)\big(\psi(n)+\sigma(n)\big)+\psi^2(n)\big(\varphi(n)+\sigma(n)\big)+\sigma^2(n)\big(\varphi(n)+\psi(n)\big)\\
&=p^3(p-1)^2(p+1)+p^3(p+1)^2(p-1)+p^2(p-1)^2(p^2+p+1)\\
&+p^2(p+1)^2(p^2+p+1)+p(p-1)(p^2+p+1)^2+p(p+1)(p^2+p+1)^2\\
&=6p^6+6p^5+8p^4+6p^3+4p^2\\
&>6p^6+6p^4+2p^2+2\\
&=6n^3+6n^2+2n+2\,.
\end{align*}
Now we assume that \eqref{lowerbound4} is true for every natural number $n$ with
$\Omega(n)=m$ for some natural number $m\geq2$.
Let $p$ be a prime number. Then $\Omega(np)=\Omega(n)+1$.

Case A.  \;  $p\nmid n$. Now \eqref{psisigma}, Lemma \ref{varphipsi2n} and Lemma \ref{varphisigma2n} imply
\begin{align*}
&\varphi^2(np)\big(\psi(np)+\sigma(np)\big)+\psi^2(np)\big(\varphi(np)+\sigma(np)\big)+\sigma^2(np)\big(\varphi(np)+\psi(np)\big)\\
&=\varphi^2(n)\psi(n)(p-1)^2(p+1)+\varphi^2(n)\sigma(n)(p-1)^2(p+1)+\psi^2(n)\varphi(n)(p+1)^2(p-1)\\
&+\psi^2(n)\sigma(n)(p+1)^3+\sigma^2(n)\varphi(n)(p+1)^2(p-1)+\sigma^2(n)\psi(n)(p+1)^3\\
&=(p-1)^2(p+1)\big[\varphi^2(n)\big(\psi(n)+\sigma(n)\big)+\psi^2(n)\big(\varphi(n)+\sigma(n)\big)+\sigma^2(n)\big(\varphi(n)+\psi(n)\big)\big]\\
&+(2p^2-2)\big[\psi^2(n)\varphi(n)+\sigma^2(n)\varphi(n)\big]+(4p^2+4p)\big[\psi^2(n)\sigma(n)+\sigma^2(n)\psi(n)\big]\\
&\geq(p-1)^2(p+1)(6n^3+6n^2+2n+2)+(2p^2+4p+2)\big[\psi^2(n)\sigma(n)+\sigma^2(n)\psi(n)\big]\\
&+(2p^2-2)\big[\psi^2(n)\big(\varphi(n)+\sigma(n)\big)+\sigma^2(n)\big(\varphi(n)+\psi(n)\big)\big]\\
&\geq(6n^3+6n^2+2n+2)(p-1)^2(p+1)+2(n+1)^3(2p^2+4p+2)+4n(n+1)^2(2p^2-2)\\
&=6n^3p^3+22n^2p^2+22np+6+6n^3p^2+6n^2p^3+2n^3p+2np^3+2n^3+18n^2p+18np^2\\
&+2p^3+2n^2+2p^2+6n+6p\\
&>6n^3p^3+6n^2p^2+2np+2\,.
\end{align*}

Case B.  \;  $p\,|\,n$. Using \eqref{varphipsisigma} we obtain
\begin{align*}
&\varphi^2(np)\big(\psi(np)+\sigma(np)\big)+\psi^2(np)\big(\varphi(np)+\sigma(np)\big)+\sigma^2(np)\big(\varphi(np)+\psi(np)\big)\\
&>p^3\big[\varphi^2(n)\big(\psi(n)+\sigma(n)\big)+\psi^2(n)\big(\varphi(n)+\sigma(n)\big)+\sigma^2(n)\big(\varphi(n)+\psi(n)\big)\big]\\
&\geq p^3(6n^3+6n^2+2n+2)\\
&=6n^3p^3+6n^2p^2+2np+2+6n^2p^2(p-1)+2np(p^2-1)+2(p^3-1)\\
&>6n^3p^3+6n^2p^2+2np+2\,.
\end{align*}
This completes the proof of Theorem 4.

\section{Proof of Theorem 5}
\indent

Consider several cases.

Case 1. \; $\Omega(n)=1$. Bearing in mind that $n$ is a prime number we write
\begin{align*}
&\varphi^3(n)\big(\psi(n)+\sigma(n)\big)+\psi^3(n)\big(\varphi(n)+\sigma(n)\big)+\sigma^3(n)\big(\varphi(n)+\psi(n)\big)\\
&=2(n-1)^3(n+1)+2(n+1)^3(n-1)+2(n+1)^4\\
&=6n^4+8n^3+12n^2+8n-2\,.
\end{align*}

Case 2. \; $\Omega(n)=2$, $n=pq$, where $p$ and $q$ are distinct primes. Then
\begin{align*}
&\varphi^3(n)\big(\psi(n)+\sigma(n)\big)+\psi^3(n)\big(\varphi(n)+\sigma(n)\big)+\sigma^3(n)\big(\varphi(n)+\psi(n)\big)\\
&=2(p-1)^3(q-1)^3(p+1)(q+1)+2(p+1)^3(q+1)^3(p-1)(q-1)+2(p+1)^4(q+1)^4\\
&=6p^4q^4+48p^3q^3+72p^2q^2+48pq+6+8p^4q^3+8p^3q^4+12p^4q^2+12p^2q^4\\
&+p^4(8q-2)+48p^3q^2+48p^2q^3+q^4(8p-2)+16p^3q+16pq^3+8p^3+48p^2q+48pq^2\\
&+8q^3+12p^2+12q^2+8p+8q\\
&>6p^4q^4+8p^3q^3+12p^2q^2+8pq-2\\
&=6n^4+8n^3+12n^2+8n-2\,.
\end{align*}

Case 3. \; $\Omega(n)=2$, $n=p^2$, where $p$ is a prime. Then
\begin{align*}
&\varphi^3(n)\big(\psi(n)+\sigma(n)\big)+\psi^3(n)\big(\varphi(n)+\sigma(n)\big)+\sigma^3(n)\big(\varphi(n)+\psi(n)\big)\\
&=p^4(p-1)^3(p+1)+p^4(p+1)^3(p-1)+p^3(p-1)^3(p^2+p+1)\\
&+p^3(p+1)^3(p^2+p+1)+p(p-1)(p^2+p+1)^3+p(p+1)(p^2+p+1)^3\\
&=6p^8+8p^7+20p^6+20p^5+16p^4+6p^3+2p^2\\
&>6p^8+8p^6+12p^4+8p^2-2\\
&=6n^4+8n^3+12n^2+8n-2\,.
\end{align*}
Now we assume that \eqref{lowerbound5} is true for every natural number $n$ with
$\Omega(n)=m$ for some natural number $m\geq2$.
Let $p$ be a prime number. Then $\Omega(np)=\Omega(n)+1$.

Case A.  \;  $p\nmid n$. Now \eqref{psisigma}, Lemma \ref{varphipsi2n} and Lemma \ref{varphisigma2n} yield
\begin{align*}
&\varphi^3(np)\big(\psi(np)+\sigma(np)\big)+\psi^3(np)\big(\varphi(np)+\sigma(np)\big)+\sigma^3(np)\big(\varphi(np)+\psi(np)\big)\\
&=\varphi^3(n)\psi(n)(p-1)^3(p+1)+\varphi^3(n)\sigma(n)(p-1)^3(p+1)+\psi^3(n)\varphi(n)(p+1)^3(p-1)\\
&+\psi^3(n)\sigma(n)(p+1)^4+\sigma^3(n)\varphi(n)(p+1)^3(p-1)+\sigma^3(n)\psi(n)(p+1)^4\\
&=(p-1)^3(p+1)\big[\varphi^3(n)\big(\psi(n)+\sigma(n)\big)+\psi^3(n)\big(\varphi(n)+\sigma(n)\big)+\sigma^3(n)\big(\varphi(n)+\psi(n)\big)\big]\\
&+(4p^3-4p)\big[\psi^3(n)\varphi(n)+\sigma^3(n)\varphi(n)\big]+(6p^3+6p^2+2p+2)\big[\psi^3(n)\sigma(n)+\sigma^3(n)\psi(n)\big]\\
&\geq(p-1)^3(p+1)(6n^4+8n^3+12n^2+8n-2)\\
&+(4p^3-4p)\big[\psi^3(n)\big(\varphi(n)+\sigma(n)\big)+\sigma^3(n)\big(\varphi(n)+\psi(n)\big)\big]\\
&+(2p^3+6p^2+6p+2)\big[\psi^3(n)\sigma(n)+\sigma^3(n)\psi(n)\big]\\
&\geq(6n^4+8n^3+12n^2+8n-2)(p-1)^3(p+1)\\
&+4n(n+1)^3(4p^3-4p)+2(n+1)^4(2p^3+6p^2+6p+2)\\
&=6n^4p^4+48n^3p^3+72n^2p^2+48np+6+n^4(8p^3-2)+8n^3p^4+12n^4p^2\\
&+p^4(12n^2-2)+8n^4p+48n^3p^2+48n^2p^3+8np^4+16n^3p+16np^3+8n^3+48n^2p\\
&+48np^2+8p^3+12n^2+12p^2+8n+8p\\
&>6n^4p^4+8n^3p^3+12n^2p^2+8np-2\,.
\end{align*}

Case B.  \;  $p\,|\,n$. Using \eqref{varphipsisigma} we deduce
\begin{align*}
&\varphi^3(np)\big(\psi(np)+\sigma(np)\big)+\psi^3(np)\big(\varphi(np)+\sigma(np)\big)+\sigma^3(np)\big(\varphi(np)+\psi(np)\big)\\
&>p^4\big[\varphi^3(n)\big(\psi(n)+\sigma(n)\big)+\psi^3(n)\big(\varphi(n)+\sigma(n)\big)+\sigma^3(n)\big(\varphi(n)+\psi(n)\big)\big]\\
&\geq p^4(6n^4+8n^3+12n^2+8n-2)\\
&=6n^4p^4+8n^3p^3+12n^2p^2+8np+8n^3p^3(p-1)\\
&+12n^2p^2(p^2-1)+4np(p^3-2)+2p^4(2n-1)\\
&>6n^4p^4+8n^3p^3+12n^2p^2+8np-2\,.
\end{align*}
This completes the proof of Theorem 5.


\begin{thebibliography}{0}

\bibitem{Atanassov2011} Atanassov, K. (2011). Note on $\varphi$, $\psi$ and $\sigma$-functions. Part 3.
{\it Notes on Number Theory and Discrete Mathematics}, 17(3), 13 -- 14.

\bibitem{Atanassov2013} Atanassov, K. (2013). Note on $\varphi$, $\psi$ and $\sigma$-functions. Part 6.
{\it Notes on Number Theory and Discrete Mathematics}, 19(1), 22 -- 24.

\bibitem{Dimitrov} Dimitrov, S. (2023). Lower bounds on expressions dependent on functions $\varphi(n)$, $\psi(n)$ and $\sigma(n)$.
{\it Notes on Number Theory and Discrete Mathematics}, 29(4), 22 -- 24.

\bibitem{Sandor1990} S\'{a}ndor, J., $\&$ T\'{o}th., L. (1990). On certain number-theoretic inequalities.
{\it Fib. Quart.}, 28(3), 255 -- 258.

\bibitem{SandorHandbook} S\'{a}ndor, J., Mitrinovi\'{c}, D. S., $\&$ Crstici, B. (2006). {\it Handbook of number theory I}.
Springer.

\bibitem{Sandor2014} S\'{a}ndor, J. (2014). On certain inequalities for $\varphi$, $\psi$, $\sigma$ and related functions.
{\it Notes on Number Theory and Discrete Mathematics}, 20(2), 52 -- 60.

\bibitem{Sandor2019} S\'{a}ndor, J., $\&$ Atanassov, K. (2019). Inequalities between the arithmetic functions
$\varphi$, $\psi$ and $\sigma$. Part 2.
{\it Notes on Number Theory and Discrete Mathematics}, 25(2), 30 -- 35.

\end{thebibliography}
\end{document}